\begin{document}

\newtheorem{theorem}{Theorem}[section]
\newtheorem{result}[theorem]{Result}
\newtheorem{fact}[theorem]{Fact}
\newtheorem{example}[theorem]{Example}
\newtheorem{conjecture}[theorem]{Conjecture}
\newtheorem{lemma}[theorem]{Lemma}
\newtheorem{proposition}[theorem]{Proposition}
\newtheorem{corollary}[theorem]{Corollary}
\newtheorem{facts}[theorem]{Facts}
\newtheorem{props}[theorem]{Properties}
\newtheorem*{thmA}{Theorem A}
\newtheorem{ex}[theorem]{Example}
\theoremstyle{definition}
\newtheorem{definition}[theorem]{Definition}
\newtheorem{remark}[theorem]{Remark}
\newtheorem*{defna}{Definition}

\newcommand{\notes} {\noindent \textbf{Notes.  }}
\newcommand{\note} {\noindent \textbf{Note.  }}
\newcommand{\defn} {\noindent \textbf{Definition.  }}
\newcommand{\defns} {\noindent \textbf{Definitions.  }}
\newcommand{\x}{{\bf x}}
\newcommand{\z}{{\bf z}}
\newcommand{\B}{{\bf b}}
\newcommand{\V}{{\bf v}}
\newcommand{\T}{\mathbb{T}}
\newcommand{\Z}{\mathbb{Z}}
\newcommand{\Hp}{\mathbb{H}}
\newcommand{\D}{\mathbb{D}}
\newcommand{\R}{\mathbb{R}}
\newcommand{\N}{\mathbb{N}}
\renewcommand{\B}{\mathbb{B}}
\newcommand{\C}{\mathbb{C}}
\newcommand{\ft}{\widetilde{f}}
\newcommand{\dt}{{\mathrm{det }\;}}
 \newcommand{\adj}{{\mathrm{adj}\;}}
 \newcommand{\0}{{\bf O}}
 \newcommand{\av}{\arrowvert}
 \newcommand{\zbar}{\overline{z}}
 \newcommand{\xbar}{\overline{X}}
 \newcommand{\htt}{\widetilde{h}}
\newcommand{\ty}{\mathcal{T}}
\renewcommand\Re{\operatorname{Re}}
\renewcommand\Im{\operatorname{Im}}
\newcommand{\tr}{\operatorname{Tr}}

\newcommand{\ds}{\displaystyle}
\numberwithin{equation}{section}

\renewcommand{\theenumi}{(\roman{enumi})}
\renewcommand{\labelenumi}{\theenumi}

\title{On quasiregular linearizers}

\author{Alastair Fletcher}
\address{Department of Mathematical Sciences, Northern Illinois University, DeKalb, IL 60115-2888. USA}
\email{fletcher@math.niu.edu}

\author{Douglas Macclure}
\address{Department of Mathematical Sciences, Northern Illinois University, DeKalb, IL 60115-2888. USA}
\email{doug.macclure@gmail.com}
\subjclass[2010]{Primary 37F10; Secondary 30C65, 30D05}

\begin{abstract}
Linearization is a well-known concept in complex dynamics. If $p$ is a polynomial and $z_0$ is a repelling fixed point, then there is an entire function $L$ which conjugates $p$ to the linear map $z\mapsto p'(z_0)z$. This notion of linearization carries over into the quasiregular setting, in the context of repelling fixed points of uniformly quasiregular mappings. In this article, we investigate how linearizers arising from the same uqr mapping and the same repelling fixed point are related. In particular, any linearizer arising from a uqr solution to a Schr\"oder equation is shown to be automorphic with respect to some quasiconformal group.
\end{abstract}

\maketitle

\section{Introduction}

\subsection{Complex dynamics}
Complex dynamics is a currently very active field of research. The groundwork for this exciting field of analysis was laid in the 19th and early 20th centuries by Poincar\'{e}, Julia,  Fatou, and several other mathematicians. Only sporadic progress was seen until the early 1980s, when computers provided a fascinating glimpse at the austere beauty and symmetry that hides within complex dynamics. Douday, Hubbard and Sullivan were the first to introduce quasiconformal methods into complex dynamics which helped bring about new impetus. See \cite{BF} for quasiconformal methods in complex dynamics.

A central theme in this field of mathematics is linearization, which allows one to study local behavior near certain fixed points by  conjugating to a simpler and more easily understood function. In complex dynamics, Koenig's Linearization Theorem illustrates this idea. Given a polynomial $p:\C \to \C$ with fixed point $z_0$ and multiplier $p'(z_0)$ with $|p'(z_0)| \notin \{ 0,1\}$, there exists a neighborhood $V$ of $0$ and a holomorphic function $L:V\to \C$ such that $p(L(z)) = L(p'(z_0)z)$ holds in $V$. If $0<|p'(z_0)|<1$, then $z_0$ is an attracting fixed point and the functional equation $p(L(z)) = L(p'(z_0)z)$ implies that the domain of definition of $L^{-1}$ can be extended to the basin of attraction of $z_0$. On the other hand, and of more interest to this paper, if $|p'(z_0)| >1$, then $z_0$ is a repelling fixed point and the functional equation implies that $L$ can be extended to a transcendental entire function.
See \cite{Milnor} for further analysis.

\subsection{Quasiregular dynamics}

Quasiregular mappings can be seen as a higher dimensional analogue to holomorphic mappings of a single complex variable. Informally, these are mappings with a bounded amount of distortion. Since the generalized Liouville's Theorem states that the only holomorphic mappings in higher dimensions are M\"obius transformations, it is natural to allow distortion to have an interesting function theory.
 
This means that the iteration of quasiregular mappings is the natural higher dimensional analogue of complex dynamics.  There are results from complex analysis that hold in the setting of quasiregular mappings, for example there are quasiregular versions of Picard's Theorem and Montel's Theorem. Since the complex version of these results are crucial to the theory of complex dynamics, the fact there are quasiregular versions indicates a similar theory can be established. See \cite{Rickman} for an introduction to quasiregular mappings and \cite{Berg} for an introduction to quasiregular dynamics.

\subsection{Quasiregular linearizers}

The most direct quasiregular analogue of holomorphic functions in the plane are uniformly quasiregular mappings (uqr mappings for short), for which there is a uniform bound on the distortion of the iterates. For such mappings, the Julia set and Fatou set can be defined and they partition space, just as in the complex case.
The fixed points of uqr mappings have been classified \cite{H.M.M.} in a similar fashion to the classification of fixed points of holomorphic functions. In particular, there are repelling fixed points of uniformly quasiregular mappings. See below for the precise definition.

Since quasiregular mappings are not differentiable everywhere, it is possible that a uqr mapping may not be differentiable at a fixed point. This technical difficulty in extending the idea of linearization to the uqr setting is overcome in \cite{H.M.M.}, where a generalized derivative for uqr mappings is defined, and the existence of a quasiregular linearizer is proved. The dynamics of such linearizers were studied in \cite{Fletcher1}, and in this paper we propose a continuation of this analysis. 

It is well-known, see for example \cite{Milnor}, that if $L,M$ are both linearizers of a given polynomial at a repelling fixed point, then there exists $\lambda \in \C$ such that $L(z) = M(\lambda z)$. In this paper, we will be interested in analogous results in the quasiregular setting which generalize this observation. Further, we will show that automorphic quasiregular mappings arise as linearizers.

This paper is organised as follows. In section 2, we briefly recall some definitions that are important for this paper. In section 3, we state our main results and give discussions on them. In section 4, we show how quasiregular linearizers are related. In section 5, we specialize to the case of differentiability. In section 6, we relate automorphic quasiregular mappings to linearizers via the Schr\"oder equation. Finally in section 7, we illustrate our results with a specific example.

The authors would like to thank Deepak Naidu and Dan Nicks for helpful comments.

\section{Preliminaries}

\subsection{Quasiregular mappings}
{\it Quasiregular mappings} are Sobolev mappings in $W^1_{n,loc}(\mathbb{R}^n)$ where there is a uniform bound on distortion. More precisely, a mapping $f: E \rightarrow {\R}^n$ defined on a domain $E \subset {\R}^n$ is quasiregular if $f$ belongs to the above Sobolev space and there exists $K \in [1,\infty)$ such that 
\begin{equation} 
\label{eq:1}
|f'(x)|^n \leq KJ_f(x) 
\end{equation}
almost everywhere.  Here, $J_f(x)$ denotes the Jacobian determinant of $f$ at $x \in E$.  The smallest constant $K \geq 1$ for which \eqref{eq:1} holds is called the outer dilation $K_O(f)$.  If $f$ is quasiregular, then we also have 
\begin{equation}
\label{eq:2}
J_f(x) \leq K' \inf_{|h| = 1}|f'(x)h|^n
\end{equation}
almost everywhere in $E$ for some $K' \in [1,\infty).$  The smallest constant $K' \geq 1$ for which \eqref{eq:2} holds is called the inner dilation $K_I(f).$  The dilation $K = K(f)$ of $f$ is the larger of $K_O(f)$ and $K_I(f)$, and we then say that $f$ is $K$-quasiregular. Quasiconformal mappings are injective quasiregular mappings.

Informally, a quasiregular mapping sends infinitesimal spheres to infinitesimal ellpsoids with bounded eccentricity. 
A uniformly quasiregular mapping $f$ is one for which there exists $K\ge 1$ such that $K(f^m) \leq K$ for all $m\geq 1$. Note that the Sobolev condition just implies that a quasiregular mapping is only differentiable almost everywhere.

Suppose that $f:\R^n \to \R^n$ is quasiregular. If $\lim _{|x| \to \infty} f(x)$ exists (in $\overline{\R^n}$), then $f$ is said to be of polynomial type, whereas if this limit does not exist then $f$ is said to be of transcendental type. This is in direct analogy with polynomials and transcendental entire functions in the plane.

\subsection{Generalized derivatives and Infinitesimal spaces}\label{sec:t}

Let $f$ be a uniformly quasiregular mapping. Hinkkanen, Martin and Mayer \cite{H.M.M.} define the set $Df(x_0)$ of generalized derivatives of $f$ at $x_0$ by the set of limits of
\[ \lim _{k\to \infty} \frac{ f(x_0 + \rho_k x) - f(x_0) }{\rho_k},\]
where $\rho_k \to 0$ as $k\to \infty$. That there is a chance of such a limit existing at all, is due to the fact that uqr mappings are bi-Lipschitz in a neighborhood of a fixed point \cite{H.M.M.}.
If there is only one element in $Df(x_0)$, we will call $Df(x_0)$ simple.
If $f$ is differentiable at $x_0$, then $Df(x_0)$ just contains the linear mapping $x\mapsto f'(x_0)x$ and so $Df(x_0)$ is simple.

A fixed point $x_0$ of $f$ is called repelling if one, and hence every, element of $Df(x_0)$ is a loxodromic repelling uniformly quasiconformal map, that is, of the form $\psi :\R^n \to \R^n$ with $\psi(0) =0$, $\psi(\infty ) = \infty$ and $\psi^m(x) \to \infty$ for every $x\neq 0$.

\begin{theorem}[Theorem 6.3, \cite{H.M.M.}]
Let $f$ be uqr, $x_0$ be a repelling fixed point of $f$ and $\psi \in Df(x_0)$. Then there exists a transcendental quasiregular map $L:\R^n \to \R^n$ such that $f\circ L = L \circ \psi$.
\end{theorem}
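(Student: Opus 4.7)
Translate so $x_0=0$. The plan is to mimic the classical Koenigs construction. Set
\[
L_k := f^k \circ \psi^{-k}, \qquad k \ge 0.
\]
Each $L_k$ is quasiregular with dilation bounded uniformly by $K(f)\cdot K(\psi)$, and the maps satisfy the telescoping identity $L_{k+1}\circ\psi = f\circ L_k$ by construction.

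The first step is to verify that $\{L_k\}$ is a normal family on each compact subset of $\R^n$. Since $\psi$ is loxodromic and repelling, $\psi^{-k}$ contracts any fixed compact set into a small neighbourhood of $0$ at a geometric rate $\mu^{-k}$ for some $\mu>1$; combined with the bi-Lipschitz behaviour of $f$ near a fixed point noted earlier in the paper, $|L_k(x)|$ stays bounded on compacta uniformly in $k$. A convergent subsequence $L_{k_j}\to L$ then exists, and by a further extraction $L_{k_j+1}\to L'$, so that passing to the limit in the telescoping identity gives $L'\circ\psi = f\circ L$.

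The crux is verifying that $L'=L$, so that $L$ itself satisfies the Schr\"oder equation. Setting $z = \psi^{-k-1}(x)$, the difference of consecutive terms is
\[
L_{k+1}(x) - L_k(x) = f^k\bigl(f(z)\bigr) - f^k\bigl(\psi(z)\bigr).
\]
This is where I expect the main difficulty: the bi-Lipschitz bound on $f^k$ grows geometrically like $\mu^k$, while the generalised derivative $\psi\in Df(0)$ only supplies the decay $|f(z)-\psi(z)| = o(|z|)$ along the specific sequence of scales $\rho_j$ witnessing $\psi\in Df(0)$. One must arrange that the scales $|z|\asymp\mu^{-k}$ track the $\rho_j$, likely by selecting the subsequence $k_j$ so that $\mu^{-k_j}\asymp\rho_j$. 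In the holomorphic case the stronger expansion $p(w)=\lambda w + O(|w|^2)$ makes the analogous series summable; that extra order is what is missing here.

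Once $f\circ L = L\circ\psi$ is secured, $L$ extends to all of $\R^n$: for any $x$, pick $k$ so that $\psi^{-k}(x)$ lies in the initial domain of convergence, and set $L(x) = f^k(L(\psi^{-k}(x)))$. Quasiregularity of the limit follows from the stability of bounded-dilation qr maps under locally uniform convergence, and transcendentality follows because a finite limit of $L$ at $\infty$ would, via the Schr\"oder equation together with $\psi(\infty)=\infty$, force a contradiction with the repelling dynamics of $f$ near its Julia set under the semi-conjugacy.
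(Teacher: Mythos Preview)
This theorem is not proved in the present paper; it is quoted from \cite{H.M.M.}. The only information the paper records about the argument is that linearizers arise as subsequential limits of $f^k(x_0+\rho_k x)$ for suitable scalar sequences $\rho_k\to 0$, which is not the same as your scheme $L_k=f^k\circ\psi^{-k}$.

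Your proposal contains a genuine, and correctly self-identified, gap. The telescoping identity $L_{k+1}\circ\psi=f\circ L_k$ is exact, but to obtain $f\circ L=L\circ\psi$ from a subsequential limit you need $L_{k_j}$ and $L_{k_j+1}$ to converge to the \emph{same} map. Controlling $L_{k+1}(x)-L_k(x)=f^k\bigl(f(z)\bigr)-f^k\bigl(\psi(z)\bigr)$ with $z=\psi^{-k-1}(x)$ requires an estimate on $|f(z)-\psi(z)|$ at those specific points, whereas membership $\psi\in Df(x_0)$ only furnishes $f(\rho_j y)\approx \rho_j\psi(y)$ along the particular scalar sequence $\rho_j$ that produced $\psi$. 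The orbit $\{\psi^{-k}(x)\}_k$ has no reason to track these scales, and any $o(|z|)$ you extract is swallowed by the geometric growth of the bi-Lipschitz constant of $f^k$. Your suggested fix of choosing $k_j$ with $\mu^{-k_j}\asymp\rho_j$ addresses only the radial scale and not the fact that $\psi^{-k_j}(x)$ is in general not of the form $\rho_j y$ for a fixed $y$; the approximation defining $Df(x_0)$ is tied to scalar dilations, not to iterates of $\psi^{-1}$. The construction $f^k(x_0+\rho_k x)$ in \cite{H.M.M.} is arranged precisely so that the relevant scales are the $\rho_k$ themselves, and the generalized derivative and the linearizer are extracted simultaneously from a single diagonal argument; this is why the cited proof does not pre-compose with $\psi^{-k}$.
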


Such mappings $L$ arise as limits of subsequences of
\[ f^k(x_0 + \rho_kx),\]
for an appropriate sequence $\rho_k \to 0$. We make the following definition.

\begin{definition}
Given a uqr map $f$ with repelling fixed point $x_0$, we define the set of Poincar\'e linearizers of $f$ at $x_0$ by
\[ \mathcal{L}(x_0,f) = \{ L:\R^n\to\R^n \text{ quasiregular } | f\circ L = L \circ \psi \text{ for some }\psi \in Df(x_0) \},\]
and we also assume $L\in \mathcal{L}(x_0,f)$ is injective in a neighborhood of $0$.
We further define 
\[ \mathcal{L}_1(x_0,f) = \{ \text{ limits of subsequences of }  f^k(x_0 + \rho_kx) \},\]
for some sequence $\rho_k \to 0$. 
\end{definition}

We clearly have $\mathcal{L}_1 \subset \mathcal{L}$.

We remark that a slightly different notion of Poincar\'e linearizers was used in \cite{Fletcher1}. There, it was observed that every generalized derivative of a repelling fixed point of a uqr mapping is quasiconformally conjugate to $T:x\mapsto 2x$, and hence linearizers were defined by quasiregular mappings $L$ such that $f\circ L = L\circ T$. Here, we will be interested in the generalized derivatives themselves, and not in conjugating them to $T$.

A more general notion of infinitesimal space for quasiregular mappings, and not just for uqr mappings, was introduced in \cite{G.M.R.V.}. If $f:\R^n \to \R^n$ is a non-constant quasiregular mapping, let $r(x_0,f,\rho)$ be the mean radius of the image of the ball of radius $\rho$ centered at $x_0$ under $f$, that is
\[ r(x_0,f,\rho) = \left ( \frac{| f(B(x_0,\rho))|}{\Omega_n} \right )^{1/n},\]
where $|E|$ denotes $n$-dimensional volume of $E$ and $\Omega_n$ is the volume of the unit ball in $\R^n$. Then the infinitesimal space is
\[ T(x_0,f) = \left \{ \text{ limits of subsequences of } \frac{f(x_0+\rho_k x)-f(x_0) }{r(x_0,f,\rho_k)} \right \},\]
as $k\to \infty$, where $\rho_k \to 0$ as $k\to \infty$. Every element of $T(x_0,f)$ is a quasiregular mapping of polynomial type. See \cite{G.M.R.V.} for more details.

If $f$ is differentiable at $x_0$ with non-degenerate derivative, then $T(x_0,f)$ just contains the normalized derivative $x \mapsto f'(x_0)/J_f(x_0)^{1/n} x$.
If $T(x_0,f)$ contains only one mapping $g$, then $g$ is called {\it simple}. By \cite[Theorem 4.1]{G.M.R.V.}, if $g$ is simple, then it is homogeneous, that is, there exists $D>0$ such that for all $r>0$, 
\begin{equation}
\label{eq:hom} g(rx) = r^Dg(x),
\end{equation}
for all $x\in \R^n$.

\section{Statement of results}

We are now in a position to state our results.

\begin{theorem}\label{thm:1}
Let $n\geq 2$ and $f:\R^n \to \R^n$ be a uqr mapping with repelling fixed point $x_0$. Suppose that $L,M \in \mathcal{L}(x_0,f)$. Then there exists a quasiconformal mapping $G:\R^n \to \R^n$ such that $M = L\circ G$.
\end{theorem}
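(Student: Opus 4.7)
The plan is to define $G$ locally near $0$ via $G_0 := L^{-1} \circ M$ and then extend $G$ to all of $\R^n$ using the functional equations together with the expanding dynamics of $\psi_M$. Since both $L$ and $M$ are injective (hence quasiconformal) on a common neighborhood $U$ of $0$, and both fix $0$, shrinking $U$ if necessary we may arrange that $M(U)$ lies in the image of $L|_U$, so that $G_0 = L^{-1} \circ M$ is a quasiconformal map of $U$ onto a neighborhood $V$ of $0$ with $G_0(0)=0$.

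The first substantive step is to show that $G_0$ locally conjugates $\psi_M$ to $\psi_L$. From $L \circ \psi_L = f \circ L$ one obtains, on the appropriate branch of $L^{-1}$, the identity $\psi_L \circ L^{-1} = L^{-1} \circ f$; composing with $M$ on the right and using $f \circ M = M \circ \psi_M$ yields
\[ \psi_L \circ G_0 = G_0 \circ \psi_M \]
on a sufficiently small neighborhood of $0$. The main subtlety here is choosing $U$ small enough that every composition stays within the domain of the chosen branch of $L^{-1}$.

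Now, since $\psi_M$ is loxodromic repelling, $\psi_M^{-k}(x) \to 0$ for every $x \in \R^n$; for each such $x$, pick $k = k(x)$ large enough that $\psi_M^{-k}(x) \in U$, and define
\[ G(x) = \psi_L^k \circ G_0 \circ \psi_M^{-k}(x). \]
The local conjugacy makes this definition independent of $k$ once $k \geq k(x)$, so $G$ is single-valued; injectivity of each factor gives injectivity of $G$, while surjectivity follows from $\psi_L$ being loxodromic repelling, so that $\bigcup_{k} \psi_L^k(V) = \R^n$. Iterating $L \circ \psi_L = f \circ L$ and $f \circ M = M \circ \psi_M$ then gives
\[ L \circ G(x) = f^k\bigl(M(\psi_M^{-k}(x))\bigr) = M(x), \]
as required.

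The remaining point, and where uniform quasiregularity is essential, is to check that $G$ has a uniform dilation bound rather than only being locally quasiconformal. Because $f$ is uqr, every $\psi \in Df(x_0)$ is uniformly quasiconformal, so there is a single constant $K$ with $K(\psi_L^k), K(\psi_M^{-k}) \leq K$ for all $k \geq 0$. Hence $G = \psi_L^k \circ G_0 \circ \psi_M^{-k}$ has dilation at most $K^2 \cdot K(G_0)$, independent of $x$ and $k$, and is therefore globally quasiconformal on $\R^n$.
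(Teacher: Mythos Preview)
Your proof is correct and follows essentially the same approach as the paper: define $G$ locally as $L^{-1}\circ M$ near $0$, then extend to all of $\R^n$ via $G = \psi_L^k \circ G_0 \circ \psi_M^{-k}$ using the loxodromic repelling dynamics, with the uniform quasiconformality of the generalized derivatives supplying a global dilation bound. If anything, your version is slightly more explicit than the paper's in verifying the local conjugacy $\psi_L \circ G_0 = G_0 \circ \psi_M$ and the resulting well-definedness of the extension.
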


There are some conditions we can place on $f$ to get a stronger equivalence between linearizers.

\begin{corollary}\label{cor:0}
In addition to the hypotheses of Theorem \ref{thm:1}, suppose that $L,M \in \mathcal{L}_1(x_0,f)$ arise as limits of
\[ L_j(x) =  f^{k_{u_j}}(x_0+xp_{k_{u_j}}) ,\quad M_j(x) =   f^{k_{v_j}}(x_0+xq_{k_{v_j}}),\]
where $u_j = v_j$ for infinitely many $j$. Then there exists $r>0$ such that $M(x) = L(rx)$.
\end{corollary}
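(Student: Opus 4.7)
The plan is to pass to a subsequence along which $u_j = v_j$, so that $L_j$ and $M_j$ are built from the same iterate of $f$, and then to recognize $M_j$ as a rescaling of $L_j$ whose scale factor converges.

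First, restrict to the infinite set of indices with $u_j = v_j$; writing $k_j = k_{u_j}$, $p_j = p_{k_j}$, $q_j = q_{k_j}$ and $r_j = q_j/p_j > 0$, a direct substitution in the defining expressions gives
$$M_j(x) \;=\; f^{k_j}\bigl(x_0 + (r_j x)p_j\bigr) \;=\; L_j(r_j x),$$
and equivalently $L_j(y) = M_j(y/r_j)$. Since $L_j \to L$ and $M_j \to M$ locally uniformly on $\R^n$ by the definition of $\mathcal{L}_1$, and since $L_j(0) = M_j(0) = f^{k_j}(x_0) = x_0$ for every $j$ so that $L(0) = M(0) = x_0$, the entire problem reduces to showing that $r_j$ admits a finite positive limit along a further subsequence; one can then pass to the limit in $M_j(x) = L_j(r_j x)$ to read off $M(x) = L(rx)$.

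To control the scaling factor, I would extract a subsequence with $r_j \to r \in [0,\infty]$ and rule out the two endpoints. If $r = 0$, then for any fixed $x \in \R^n$ the point $r_j x$ lies eventually in any neighborhood of $0$, so local uniform convergence gives $M_j(x) = L_j(r_j x) \to L(0) = x_0$; this makes $M \equiv x_0$ and contradicts the assumption that $M$ is injective near $0$. The case $r = \infty$ is handled symmetrically using $L_j(y) = M_j(y/r_j)$: for any fixed $y$ the argument $y/r_j \to 0$, hence $L_j(y) = M_j(y/r_j) \to M(0) = x_0$, making $L \equiv x_0$ and contradicting the local injectivity of $L$. Thus $r \in (0,\infty)$ and $M(x) = L(rx)$, as required.

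The main technical point is precisely the case $r = \infty$: naive application of local uniform convergence of $L_j$ fails because one would need to evaluate $L_j$ along a sequence escaping to infinity, but the symmetric rewriting $L_j(y) = M_j(y/r_j)$ converts this into exactly the argument already used for $r = 0$. In principle one could try to derive the conclusion from Theorem \ref{thm:1} by arguing that the quasiconformal $G$ satisfying $M = L \circ G$ must be a scaling, but working directly with the approximating sequences is cleaner and avoids any further analysis of $G$.
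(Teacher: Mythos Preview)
Your argument is correct and follows the same overall shape as the paper's proof: restrict to a subsequence with $u_j=v_j$, observe that $M_j(x)=L_j(r_jx)$ for $r_j=q_{k_j}/p_{k_j}$, show that $r_j$ has a subsequential limit $r\in(0,\infty)$, and pass to the limit.

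The one genuine difference is how the bound on $r_j$ is obtained. The paper invokes the uniform bi-Lipschitz estimates from \cite{H.M.M.} (equation (6) on p.~86 and the proof of Theorem 6.3(ii)) to get $|L_j^{-1}(M_j(x))|\le C$ on $|x|=1$, i.e.\ $r_j\le C$, and the lower bound is implicit in the same bi-Lipschitz control. Your argument instead rules out $r=0$ and $r=\infty$ by a soft contradiction: if $r_j\to 0$ then local uniform convergence forces $M\equiv x_0$, and symmetrically for $r_j\to\infty$. This is more self-contained and avoids the external reference; the paper's route, on the other hand, gives explicit quantitative control on $r_j$ uniformly in $j$ rather than only along a subsequence. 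Either way the conclusion $M(x)=L(rx)$ follows.
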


\begin{corollary}\label{cor:1}
In addition to the hypotheses in Theorem \ref{thm:1}, suppose that $Df(x_0)$ is simple, i.e. $Df(x_0) = \{ \psi \}$. Then if $L,M \in \mathcal{L}(x_0,f)$, there exists a quasiconformal mapping $G$ that commutes with $\psi$ such that $M=L\circ G$.  
\end{corollary}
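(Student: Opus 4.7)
The plan is to start from Theorem~\ref{thm:1}, which already provides a quasiconformal $G_0$ with $M = L \circ G_0$, and then modify $G_0$ into a quasiconformal map that additionally commutes with $\psi$. Since $Df(x_0) = \{\psi\}$ is a singleton, both $L$ and $M$ satisfy the Schr\"oder equation with the same multiplier: $f \circ L = L \circ \psi$ and $f \circ M = M \circ \psi$. Substituting $M = L \circ G_0$ into the second equation and comparing with the first yields the global identity $L \circ (\psi \circ G_0) = L \circ (G_0 \circ \psi)$ on $\R^n$.

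Next, I would establish local commutation. From $L(0) = M(0) = x_0$ and the local injectivity of $L$ at $0$ we obtain $G_0(0) = 0$; also $\psi(0) = 0$ by the definition of a generalized derivative at a repelling fixed point. By continuity, there is a ball $B(0,\delta)$ on which both $\psi \circ G_0$ and $G_0 \circ \psi$ take values inside the neighborhood $V$ of $0$ on which $L$ is injective, so cancelling $L$ on $V$ forces $\psi \circ G_0 = G_0 \circ \psi$ on $B(0,\delta)$. To propagate this to all of $\R^n$, I would define
\[ G(x) := \psi^k \circ G_0 \circ \psi^{-k}(x) \]
for any $k$ so large that $\psi^{-k}(x) \in B(0,\delta)$; such $k$ exists because $\psi$ is loxodromic repelling, so $\psi^{-1}$ contracts to $0$. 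The local commutation shows $G(x)$ is independent of the choice of $k$, so $G$ is well-defined, and $G = G_0$ on $B(0,\delta)$. A direct reindexing gives $G \circ \psi = \psi \circ G$, and using $L \circ \psi^k = f^k \circ L$ produces
\[ L(G(x)) = f^k\bigl(L(G_0(\psi^{-k}(x)))\bigr) = f^k\bigl(M(\psi^{-k}(x))\bigr) = M(\psi^k\psi^{-k}(x)) = M(x). \]

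The main obstacle is to confirm that $G$ is genuinely a quasiconformal self-homeomorphism of $\R^n$, not merely a pointwise limit of conjugates. The crucial ingredient is that $\psi$, being in $Df(x_0)$ for a uqr $f$ with $x_0$ repelling, is uniformly quasiconformal: there exists $K_0$ such that every iterate $\psi^k$ is $K_0$-quasiconformal. Hence each conjugate $\psi^k \circ G_0 \circ \psi^{-k}$ is $K_0^2 K(G_0)$-quasiconformal with a bound independent of $k$, and on any compact subset of $\R^n$ the map $G$ coincides with such a conjugate for $k$ sufficiently large. Injectivity and surjectivity of $G$ can then be read off from the corresponding properties of the conjugates. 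Without the uniform quasiconformality of $\psi$, this iteration-based extension would fail, since the dilations of $\psi^k$ would blow up with $k$.
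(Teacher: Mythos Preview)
Your argument is correct and follows essentially the same route as the paper. The only difference is one of packaging: the paper observes that in the construction of Theorem~\ref{thm:1} the extension formula is $G(x)=\varphi^k\circ G\circ\psi^{-k}(x)$, and when $Df(x_0)=\{\psi\}$ one has $\varphi=\psi$, so the commutation $G\circ\psi=\psi\circ G$ is immediate from the defining formula. You instead treat Theorem~\ref{thm:1} as a black box, deduce local commutation of $G_0$ with $\psi$ from the Schr\"oder equations and local injectivity of $L$, and then rebuild $G$ by the very same conjugation formula; this yields the same map (indeed $G=G_0$ globally, not merely on $B(0,\delta)$).
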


If $f$ is uqr and differentiable at $x_0$, then $Df(x_0)$ is simple and contains only $x \mapsto f'(x_0)x$. Now, since $f$ is uqr, this places a restriction on what $f'(x_0)$ can be. 

\begin{lemma}\label{lem:uqr}
Let $A$ be a real matrix representing a linear map $\psi : \R^n \to \R^n$. Then if $\psi$ is uniformly quasiconformal, all eigenvalues of $A$ must have the same modulus and the Jordan canonical form of $A$ (over $\C$) is diagonal.
\end{lemma}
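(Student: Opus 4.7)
The plan is to exploit the explicit formulas for the dilations of a linear map together with the asymptotics of $A^m$ expressed in Jordan form. Since $\psi(x)=Ax$ has constant derivative $A$ and constant Jacobian $|\det A|$, one computes directly from \eqref{eq:1} and \eqref{eq:2} that
\[ K_O(\psi) = \frac{|A|^n}{|\det A|}, \qquad K_I(\psi) = \frac{|\det A|}{\ell(A)^n}, \]
where $|A|$ is the operator norm and $\ell(A)=\min_{|h|=1}|Ah|$. Uniform quasiconformality of $\psi$ therefore provides a constant $K$ with $|A^m|^n \le K\,|\det A^m|$ and $|\det A^m| \le K\,\ell(A^m)^n$ for all $m\ge 1$; in particular $A$ is invertible.

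For the equal-modulus claim, let $\lambda_1,\dots,\lambda_n$ be the complex eigenvalues of $A$ with multiplicity, and set $\rho=\max_i|\lambda_i|$. Gelfand's formula gives $|A^m|^{1/m}\to\rho$, so $|A^m|\ge(\rho-\epsilon)^m$ for large $m$. Since $|\det A^m|=\prod_i|\lambda_i|^m$, the first bound above forces $\prod_i|\lambda_i|\ge(\rho-\epsilon)^n$; letting $\epsilon\to 0$ yields $\prod_i|\lambda_i|\ge\rho^n$. The reverse inequality is automatic from the definition of $\rho$, so equality holds and hence $|\lambda_i|=\rho$ for every $i$.

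For the diagonalizability claim, suppose for contradiction that $A$ has a Jordan block $J$ of size $k\ge 2$ corresponding to some eigenvalue $\lambda$, necessarily with $|\lambda|=\rho$. Writing $J=\lambda I+N$ with $N$ the standard nilpotent shift of size $k$,
\[ J^m=\sum_{j=0}^{k-1}\binom{m}{j}\lambda^{m-j}N^j, \]
so the $(1,k)$ entry of $J^m$ equals $\binom{m}{k-1}\lambda^{m-k+1}$. Applying $J^m$ to the basis vector $e_k$ in the corresponding invariant subspace shows $|A^m|\ge c\,m^{k-1}\rho^m$ for some $c>0$ and all sufficiently large $m$. Combined with $|\det A^m|=\rho^{mn}$ from Step~1, this gives
\[ \frac{|A^m|^n}{|\det A^m|}\ge \frac{c^n m^{n(k-1)}\rho^{mn}}{\rho^{mn}} = c^n m^{n(k-1)} \to \infty, \]
contradicting the uniform bound on $K_O(A^m)$. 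Hence every Jordan block has size $1$, and the Jordan canonical form of $A$ over $\C$ is diagonal.

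There is no serious obstacle here beyond bookkeeping; the only mildly delicate point is extracting polynomial growth from a nontrivial Jordan block, which follows immediately from the binomial expansion of $(\lambda I + N)^m$.
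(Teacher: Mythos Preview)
Your proof is correct and follows essentially the same route as the paper: both arguments show that the outer dilation quotient $|A^m|^n/|\det A^m|$ blows up unless the eigenvalues share a common modulus and every Jordan block is trivial, using the spectral radius formula for the first claim and the binomial expansion of $(\lambda I+N)^m$ for the second. The only cosmetic difference is that the paper first conjugates to the Jordan form $B=PAP^{-1}$ and controls $\|B^m\|$ via the $\|\cdot\|_\infty$ norm, whereas you estimate $|A^m|$ directly by applying $A^m$ to a vector in the generalized eigenspace; your version is arguably a bit cleaner.
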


In particular, $\psi$ is conjugate to a composition of a scaling and a rotation.
Denote by $Z(A)$ the centralizer of $A$ in the set of $n\times n$ matrices, that is, the matrices that commute with $A$.

\begin{corollary} \label{cor:2}
In addition to the hypotheses in Theorem \ref{thm:1}, suppose that $f$ is differentiable at $x_0$ and that $L,M \in \mathcal{L}(x_0,f)$ are differentiable at $0$. Then there exists a linear map $G:\R^n \to \R^n$ contained in $Z(f'(x_0))$ such that $M = L \circ G$.
\end{corollary}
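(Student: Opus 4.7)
The plan is to combine Theorem~\ref{thm:1} with Corollary~\ref{cor:1}, and then use the differentiability hypothesis on $L$, $M$ (hence on $G$) at $0$ to upgrade the quasiconformal map produced by Theorem~\ref{thm:1} to a linear map lying in $Z(f'(x_0))$.

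First I would invoke Theorem~\ref{thm:1} to obtain a quasiconformal $G$ with $M = L\circ G$. Since $f$ is differentiable at $x_0$, the remark in \S\ref{sec:t} gives that $Df(x_0)$ is simple and equals $\{\psi\}$ with $\psi(x) = Cx$, where $C := f'(x_0)$. Corollary~\ref{cor:1} then promotes the conjugation to the global commutation relation $G(Cx) = CG(x)$ for every $x \in \R^n$. Using $L(0) = x_0 = M(0)$, the local injectivity of $L$, and continuity of $G$, one obtains $G(0) = 0$; and the chain rule applied to $M = L\circ G$ shows that $G$ is differentiable at $0$ with $H := G'(0) = L'(0)^{-1}M'(0)$ (here $L'(0)$ is invertible, since $L$ is a local quasiconformal homeomorphism at $0$ and the linearization equation forces a nondegenerate derivative).

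Next, differentiating the commutation at $x = 0$ yields $HC = CH$, so $H \in Z(C)$. Iterating the commutation produces $G(C^{k}x) = C^{k}G(x)$ for every $k \in \Z$; substituting $C^{-k}x$ for $x$ rewrites this as
\[ G(x) = C^{k} G(C^{-k}x). \]
Feeding in the first-order expansion $G(y) = Hy + \varepsilon(y)$ with $|\varepsilon(y)|/|y| \to 0$ as $y\to 0$, and using $H \in Z(C)$ to collapse $C^{k} H C^{-k} = H$, this becomes
\[ G(x) = Hx + C^{k}\varepsilon(C^{-k}x). \]

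The main obstacle is to show that $C^{k}\varepsilon(C^{-k}x) \to 0$ as $k\to\infty$, and this is where Lemma~\ref{lem:uqr} is indispensable. It guarantees that $C$ is diagonalizable over $\C$ with all eigenvalues of the same modulus $\lambda$; because $\psi$ is loxodromic repelling we have $\lambda > 1$, and in particular $|C^{-k}x| \to 0$. Writing $C = P\Lambda P^{-1}$ with $\|\Lambda^{\pm k}\| = \lambda^{\pm k}$, the condition number $\kappa(P) = \|P\|\,\|P^{-1}\|$ gives the crucial uniform bound $\|C^{k}\|\,\|C^{-k}\| \le \kappa(P)^{2}$. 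Combined with the $o$-estimate, this yields $|C^{k}\varepsilon(C^{-k}x)| \le \kappa(P)^{2}|x|\cdot o(1) \to 0$, and letting $k\to\infty$ leaves $G(x) = Hx$ for every $x \in \R^n$. Thus $G$ is linear and $G = H \in Z(f'(x_0))$, as required. Without the equal-modulus eigenvalue constraint from Lemma~\ref{lem:uqr}, the product $\|C^{k}\|\,\|C^{-k}\|$ could grow exponentially and the entire error estimate would collapse; this is the one place where the uqr hypothesis (through Lemma~\ref{lem:uqr}) does genuine work beyond what was already used to obtain $G$.
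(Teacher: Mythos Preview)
Your proposal is correct and follows essentially the same route as the paper's proof: reduce to Corollary~\ref{cor:1} to get a quasiconformal $G$ commuting with $\psi(x)=f'(x_0)x$, show $G'(0)\in Z(f'(x_0))$ by differentiating the commutation at $0$, and then use the iterated relation $G(x)=\psi^k G(\psi^{-k}x)$ together with Lemma~\ref{lem:uqr} to kill the error term and conclude $G=G'(0)$. The only cosmetic difference is that the paper phrases the diagonalization over $\R$ (writing $\psi=P^{-1}BP$ with $B$ a rotation times a dilation), whereas you diagonalize over $\C$; both yield the same bound $\|C^k\|\,\|C^{-k}\|\le \kappa(P)^2$, which is the crux of the argument.
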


Note that if $f'(x_0)$ is a scalar multiple of the identity, then $G$ can be any linear map, but in general there are more restrictions on what $G$ can be.

In dimension two and the holomorphic case, the assumption that $f,L,M$ are complex differentiable implies that the linear map $G$ must in fact be complex linear, i.e. $G(z) = \lambda z$ for some $\lambda \in \C$. This recovers the usual relationship between linearizers.

Next, let  $h:\R^n\to\R^n$ be a quasiregular mapping which is automorphic with respect to a discrete group $\Gamma$ of isometries of $\R^n$. This means that $h\circ\gamma = h$ for any $\gamma \in \Gamma$ and that $\Gamma$ acts transitively on the fibres $h^{-1}(y)$, that is, if $x_1,x_2 \in h^{-1}(y)$ then there exists $\gamma \in \Gamma$ such that $\gamma(x_1) = x_2$.

\begin{theorem}[\cite{IwMart}, pp. 501-502]
Let $\Gamma$ be a discrete group such that $h: \mathbb{R}^n \rightarrow \mathbb{R}^n$ is automorphic with respect to $\Gamma$. If there is a similarity $A = {\lambda}\mathcal{O}$, where $\lambda\in \mathbb{R}$ and $\mathcal{O}$ is an orthogonal transformation, such that
\[ A{\Gamma}A^{-1} \subset \Gamma, \] then there is a unique solution $f$ to the Schr\"oder functional equation \[ f \circ h = h \circ A\] and $f$ is a uniformly quasiregular mapping of $h(\mathbb{R}^n).$ 
\end{theorem}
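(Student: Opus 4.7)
The plan is to construct $f$ directly as a ``push-forward'' of $A$ through the automorphic map $h$, verify quasiregularity away from the branch set of $h$, then remove that lower-dimensional singular set, and finally iterate to obtain the uniform bound.

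\smallskip
\noindent\textbf{Step 1 (definition, functional equation, uniqueness).} I would set $f(y):=h(Ax)$ for an arbitrary $x\in h^{-1}(y)$ and any $y\in h(\R^n)$, and then show this is independent of the choice of $x$. If $h(x_1)=h(x_2)$, transitivity of $\Gamma$ on fibres gives $\gamma\in\Gamma$ with $x_2=\gamma(x_1)$, whence
\[ Ax_2 \;=\; (A\gamma A^{-1})(Ax_1). \]
The hypothesis $A\Gamma A^{-1}\subset\Gamma$ ensures $A\gamma A^{-1}\in\Gamma$, and automorphy of $h$ then yields $h(Ax_2)=h(Ax_1)$. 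The functional equation $f\circ h=h\circ A$ is built into the construction, and uniqueness is automatic: any other solution must agree with $f$ on the whole image $h(\R^n)$.

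\smallskip
\noindent\textbf{Step 2 (local quasiregularity).} Off the branch set $B_h$, the quasiregular mapping $h$ is a local quasiconformal homeomorphism. On a small neighbourhood $U$ of a point $y_0\in h(\R^n)\setminus h(B_h)$, choose a single-valued local inverse $h_U^{-1}$ and observe $f|_U = h\circ A\circ h_U^{-1}$. Because $A$ is a similarity we have $K(A)=1$, and composition multiplies distortion, so $K(f|_U)\leq K(h)^2$ with a bound independent of $y_0$.

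\smallskip
\noindent\textbf{Step 3 (removing the branch set).} The branch set $B_h$ has topological dimension at most $n-2$, so $h(B_h)$ is small in both the topological and the measure-theoretic sense. I would verify continuity of the formula in Step~1 across $h(B_h)$ directly, using openness and discreteness of $h$ to extract convergent subsequences of preimages, and then invoke a removable singularity theorem for bounded-distortion quasiregular maps to upgrade the a.e.\ bound from Step~2 to genuine $K(h)^2$-quasiregularity of $f$ on all of $h(\R^n)$.

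\smallskip
\noindent\textbf{Step 4 (uniformity, and the main obstacle).} Iterating the functional equation gives $f^m\circ h=h\circ A^m$ for every $m\geq 1$. Since $A^m=\lambda^m\mathcal{O}^m$ is again a similarity with $K(A^m)=1$, exactly the same local argument produces $K(f^m)\leq K(h)^2$ uniformly in $m$, which is the uniform quasiregularity. The genuine difficulty is Step~3: one has to know that the locally defined expression $h\circ A\circ h^{-1}$ extends as a bona fide quasiregular map across the singular set of $h$, rather than leaving a branch-like defect in $f$; this is where openness and discreteness of $h$, together with a removability theorem for bounded-distortion quasiregular mappings, do the essential work.
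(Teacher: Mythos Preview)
The paper does not give its own proof of this theorem; it is quoted verbatim from Iwaniec and Martin's book \cite{IwMart}, pp.~501--502, and is used only as background for Corollary~\ref{cor:3}. There is therefore nothing in the paper to compare your argument against. For what it is worth, your outline is the standard one from that source: define $f$ as the push-forward $f(h(x))=h(Ax)$, use $A\Gamma A^{-1}\subset\Gamma$ and transitivity of $\Gamma$ on fibres to see this is well-defined, read off the local distortion bound $K(f)\le K(h)^2$ from $f=h\circ A\circ h^{-1}$ away from the branch set, remove the branch set, and iterate using $K(A^m)=1$ to get the same bound for every $f^m$.
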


The Schr\"oder equation is exactly the same functional equation satisfied by Poincar\'e linearizers, and hence the $\Gamma$-automorphic functions $h$ of the Schr\"oder equation are linearizers. It turns out that all the other linearizers in the same class as $h$ are also automorphic with respect to a quasiconformal group arising as a conjugate of $\Gamma$. Recall that a quasiconformal group consists of mappings which are all $K$-quasiconformal for a fixed $K$.

\begin{corollary}\label{cor:3}
Suppose $h: {\R}^n \rightarrow {\R}^n$ is a quasiregular mapping that is automorphic with respect to a discrete group $\Gamma$, and $f$ is a uniformly quasiregular mapping that is a solution to the Schr\"oder functional equation $f \circ h = h \circ A$, where $A=\lambda \mathcal{O}$ and $\lambda >1$. Then $h(0)$ is a repelling fixed point of $f$. If $L\in \mathcal{L}(h(0),f)$, then $L$ is a quasiregular mapping which is automorphic with respect to a quasiconformal group $\Gamma_G$, obtained by conjugating $\Gamma$ by a quasiconformal mapping $G$.
\end{corollary}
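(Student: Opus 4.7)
The plan is to exhibit $h$ itself as a Poincar\'e linearizer in $\mathcal{L}(h(0),f)$, then apply Theorem \ref{thm:1} to obtain $L = h \circ G$ for some quasiconformal $G$, and finally verify that $\Gamma_G := G^{-1}\Gamma G$ serves as the desired automorphy group for $L$. The fact that $h(0)$ is a fixed point of $f$ is immediate from evaluating the Schr\"oder equation at $0$: since $A(0) = 0$, one has $f(h(0)) = h(A(0)) = h(0)$. The map $A = \lambda \mathcal{O}$ with $\lambda > 1$ is loxodromic repelling, so to bring $h(0)$ under the hypotheses of Theorem \ref{thm:1} it remains to check that $A$ (or a quasiconformal conjugate of it) lies in $Df(h(0))$. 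Here I would use that $\Gamma$ acts freely at $0$, so $h$ is locally injective near $0$; writing points close to $h(0)$ as $h(y)$ for $y$ near $0$, the identity $f(h(y)) = h(Ay)$ converts the difference quotients $(f(h(0)+\rho_k x)-h(0))/\rho_k$ defining $Df(h(0))$ into expressions controlled by $A$ and the infinitesimal space $T(0,h)$. If only a conjugate $\tilde A = PAP^{-1}$ of $A$ arises as a generalized derivative, I would replace $h$ by $h \circ P^{-1}$, which is automorphic under $P\Gamma P^{-1}$ and satisfies a Schr\"oder equation with matrix $\tilde A$; this substitution does not alter the final conclusion.

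With $h \in \mathcal{L}(h(0),f)$ established, Theorem \ref{thm:1} applied to the pair $\{h, L\}$ produces a quasiconformal $G:\R^n \to \R^n$ with $L = h \circ G$. For each $\gamma \in \Gamma$ set $\gamma' := G^{-1}\gamma G$. Then
\[L \circ \gamma' = h \circ G \circ G^{-1}\gamma G = (h \circ \gamma) \circ G = h \circ G = L,\]
so $L$ is invariant under $\Gamma_G := \{\gamma' : \gamma \in \Gamma\}$. Transitivity of $\Gamma_G$ on fibres of $L$ follows from the corresponding property of $\Gamma$ on fibres of $h$: if $L(x_1) = L(x_2)$ then $h(G(x_1)) = h(G(x_2))$, so there exists $\gamma \in \Gamma$ with $G(x_1) = \gamma G(x_2)$, whence $x_1 = \gamma'(x_2)$. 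Each $\gamma'$ is the composition of the $K_G$-quasiconformal map $G^{-1}$, an isometry, and the $K_G$-quasiconformal map $G$, so has dilation at most $K_G^2$; hence $\Gamma_G$ is a uniformly $K_G^2$-quasiconformal group, as required.

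The main obstacle is the first step: rigorously verifying $h \in \mathcal{L}(h(0),f)$, i.e.\ that $A$ (or a quasiconformal conjugate thereof) is a genuine generalized derivative of $f$ at $h(0)$. The subtlety is that $h$ is typically not differentiable at $0$ when $\Gamma$ has branching behaviour nearby, so the identification must pass through the infinitesimal space $T(0,h)$ and may force the auxiliary conjugation indicated above. Once this technical point is settled, everything downstream is a formal consequence of Theorem \ref{thm:1} together with the definition of automorphy.
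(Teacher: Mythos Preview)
Your second half---the conjugation calculation showing $L\circ\gamma'=L$ and the transitivity check---matches the paper's argument exactly (up to swapping $G$ for $G^{-1}$: the paper writes $h=L\circ G$ and $\gamma_G=G\gamma G^{-1}$). In fact you are more careful than the paper, which omits the transitivity-on-fibres verification.

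The difference is in the first half. The paper does \emph{not} attempt to show $h\in\mathcal{L}(h(0),f)$, and in particular never checks that $A\in Df(h(0))$. Instead it simply reruns the construction from the \emph{proof} of Theorem~\ref{thm:1}: that argument builds $G=L^{-1}\circ M$ on a neighbourhood of $0$ and extends it via $G=\varphi^k\circ G\circ\psi^{-k}$, and all it actually uses is that $L$ and $M$ are locally injective at $0$ and satisfy Schr\"oder-type equations $f\circ L=L\circ\varphi$, $f\circ M=M\circ\psi$ with $\varphi,\psi$ loxodromic repelling and uniformly quasiconformal. The hypothesis that $\varphi,\psi\in Df(x_0)$ is never invoked. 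Since $A=\lambda\mathcal{O}$ with $\lambda>1$ is obviously loxodromic repelling and uniformly quasiconformal, the pair $(L,h)$ satisfies these weaker hypotheses directly, and one obtains a global quasiconformal $G$ with $L\circ G=h$ without any detour through $Df(h(0))$ or $T(0,h)$.

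So the ``main obstacle'' you identify is genuine for your route---verifying $A\in Df(h(0))$ really would require controlling the infinitesimal behaviour of $h$ at $0$, and your proposed fix via conjugation is plausible but not fully justified---but it disappears once you apply the proof of Theorem~\ref{thm:1} rather than its statement. This is a good example of a theorem whose proof is strictly more useful than its formulation.
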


It is not hard to see that $e^z$ is both an automorphic holomorphic function with respect to the group of translations $\{ z\mapsto z+2\pi i k, k\in \Z \}$ and a linearizer for $z^2$ at $z=1$. Corollary \ref{cor:3} implies that the quasiregular version of this statement holds too: the Zorich map $Z:\R^3 \to \R^3$ (we restrict to dimension three for clarity) is both automorphic and a linearizer for a uqr power mapping. We give details for these observations in the final section.

\section{Relationship between linearizers}

\begin{proof}[Proof of Theorem \ref{thm:1}]
Let $f,x_0$ be as in the hypotheses and suppose that $L,M\in\mathcal{L}(x_0,f)$.
Let $\varphi, \psi \in Df(x_0)$ be the generalized derivatives associated with $L$ and $M$ respectively.  Then 
\begin{equation}\label{eq:4.1}
 L \circ \varphi  = f \circ L \text{ and } M \circ \psi =  f \circ M.
\end{equation}
Now, the proof of \cite[Theorem 6.3]{H.M.M.} shows that $L,M$ are injective in neighborhoods of $0$ $U,V$ respectively.
Since $L$ and $M$ are linearizers for $f$ at the repelling fixed point $x_0,$ then $L(U) \cap M(V) = E \ni x_0,$ which is a subset of the domain of $f$ for which the linearization is valid.  It follows that there exists a map $G =  L^{-1} \circ M$ that is quasiconformal in the domain $D = M^{-1}(E)$ with $0 \in D$.  

With \eqref{eq:4.1}, we can further extend the domain on which $G$ is defined by repeated application of the given functional equations:  
\begin{equation}\label{eq:4.2}
 L \circ \varphi^{k} = f^k \circ L \text{ and } M \circ \psi^k  = f^k \circ M.  
\end{equation}
First note that since $\psi$ and $\phi$ are loxodromic repelling uniformly quasiconformal mappings, if $U$ is any neighbourhood of $0$ then, given a compact set $K \subset \R^n$, there exists $M\in \N$ such that $K \subset \bigcup_{m=1}^M \psi^m(U)$. Further, $\R^n = \bigcup_{m\geq 1} \psi^m(U)$.
The corresponding properties holding for $\phi$ too.
Then using \eqref{eq:4.2}, for any $x \in \psi^k(D)$, we define $G: \psi^k(D) \rightarrow {\R}^n$ by 
\begin{equation*} G(x) = (\varphi^k \circ G \circ (\psi^{-1})^k)(x).
\end{equation*}
Since $\psi$ and $\varphi$ are uniformly quasiconformal, then $G$ is $K$-quasiconformal on $\psi^k(D)$.  

The above can be summarized by the following diagram:
\begin{center}
\begin{tikzpicture}

	\node (U) {$U$};
	\node (PU) [node distance=2.4cm, right of=U] {$\varphi^k (U)$};
	\node (B) [node distance=2.4cm, below of=U] {$E$};
	\node (V) [node distance=2.4cm, below of=B] {$V$};
	\node (PV) [node distance=2.4cm, right of=V] {$\psi^k (V)$};
	\node (FB) [node distance=2.4cm, right of=B] {$f^k(E)$};
	\draw[->] (U) to node [above] {$\varphi^k$} (PU);
	\draw[->] (U) to node [left] {$L$} (B);
	\draw[->] (B) to node [above] {$f^k$} (FB);
	\draw[->] (V) to node [left] {$M$} (B);
	\draw[->] (PU) to node [right] {$L$} (FB);
	\draw[->] (PV) to node [right] {$M$} (FB);
	\draw[->] (V) to node [below] {$\psi^k$} (PV);
	\draw[->, bend left] (V) to node [left] {$G$} (U);
	\draw[->, bend right] (PV) to node [right] {$G$} (PU);
         
\end{tikzpicture}
\end{center}
Since $\psi$ and $\varphi$ are uniformly quasiconformal, $G$ is $K$-quasiconformal on ${\R}^n$.  Also, since $G = L^{-1} \circ M$ on $D$, then $M = L \circ G$ on $D$.  But, since we can globally extend $G, L,$ and $M$, then $M = L \circ G$ on ${\R}^n$.  
\end{proof}

\begin{proof}[Proof of Corollary \ref{cor:0}]
Suppose that $L,M \in \mathcal{L}_1(x_0,f)$ and arise as limits of 
\[ L_j(x) =  f^{k_{u_j}}(x_0+xp_{k_{u_j}}) ,\quad M_j(x) =  f^{k_{v_j}}(x_0+xq_{k_{v_j}}).\]
Since $u_j = v_j$ for infinitely many $j$, we may relabel and assume that $L,M$ arise as limits of
\[ L_j(x) =  f^{k_j}(x_0+xp_{k_{j}}) ,\quad M_j(x) = f^{k_j}(x_0+xq_{k_{j}}).\]
Then $L_j^{-1}(M_j(x)) = q_{k_j}x/p_{k_j}$ in a neighborhood of $0$. By equation (6), p.86 and the proof of Theorem 6.3(ii) in \cite{H.M.M.}, there exists $C \geq 1$ such that
\[  |L_j^{-1}(M_j(x)) | \leq C, \quad |x| = 1,\]
for all $j$. Hence we may find a convergent subsequence of $q_{k_j}/p_{k_j}$ and again relabelling, we conclude that there exists $r>0$ such that $L_j^{-1}(M_j(x)) \to rx$ in a neighborhood of $0$. That is, $M(x) = L(rx)$ in a neighborhood of $0$, which is then extended to all of $\R^n$ as in the proof of Theorem \ref{thm:1}.

\end{proof}
 
\begin{proof}[Proof of Corollary \ref{cor:1}]
Suppose that $Df(x_0) = \{ \psi \}$.
Exactly as in the proof of Theorem \ref{thm:1}, we construct a quasiconformal map in a neighbourhood  of $0$ by $G=L^{-1}\circ M$ and then extend via the equation $G(x) = \psi^k(G(\psi^{-k}(x)))$. It is then clear that $G$ must commute with $\psi$.
\end{proof} 

\section{The Differentiable Case}

We recall some linear algebra; see \cite{Tropp} for some of the ideas used in this section on the Jordan canonical form. Denote by $||A||$ the operator norm of a matrix $A$, and by $||A||_{\infty}$ the norm
\[ ||A||_{\infty} =  \max_i \sum_{j=1}^n |A_{ij}| .\]
Since every two norms are equivalent on a finite dimensional space, there exists $C\geq 1$ such that
\begin{equation}\label{eq:norm}
\frac{||A||_{\infty}}{C} \leq ||A|| \leq C||A||_{\infty},
\end{equation}
for every $n\times n$ matrix $A$.

\begin{proof}[Proof of Lemma \ref{lem:uqr}]
Let $\psi:\R^n \to \R^n$ be linear and uniformly quasiconformal, and let $A$ be the $n\times n$ matrix representing $\psi$. 
Then $A = P^{-1}BP$ for some matrix $B$ in Jordan canonical form. That is
\[ B = \begin{pmatrix} \mathcal{J}_1 & & \\ &  \ddots & \\  & & \mathcal{J}_k \end{pmatrix},\quad \text{ where } \quad
\mathcal{J}_i = \begin{pmatrix} \lambda_i & 1 &  &  & \\ & \lambda _i & 1 &  & \\ & & \ddots & \ddots  & \\  & & & \lambda_i &1 \\
& &  & & \lambda_i \end{pmatrix}, \]
and the size of each Jordan block in $n_i$, we have $n_1 + \ldots + n_k = n$ and eigenvalues may be repeated.

We will show that under either the assumption that the eignevalues do not all have the same modulus, or under the assumption that $B$ is not diagonal, that
\[  \frac{ ||B^m||^n }{J_{B^m}} \to \infty\]
as $m\to \infty$. In other words, we will show that $B$ is not uniformly quasiregular. This then implies that $A$ is not uniformly quasiregular, since
\[ \frac{ ||A^m||^n}{J_{A^m}} = \frac{ ||P^{-1}B^mP||^n}{J_{P^{-1}}J_{B^m}J_P} = \frac{ ||P||^n\cdot ||P^{-1}B^mP||^n \cdot ||P^{-1}||^n }{||P||^n \cdot ||P^{-1}||^n J_{B^m} } \geq \frac{ ||B^m||^n}{||P||^n\cdot ||P^{-1}||^n J_{B^m} },\]
and $||P||,||P^{-1}||$ are non-zero.

First, suppose that the eigenvalues do not all have the same absolute value. Then there exists a largest, say $|\lambda_s|$, and some $\lambda_i$ with $|\lambda_i| < |\lambda_s|$. By the spectral radius formula $||B^m|| \sim |\lambda_s|^m$ for large $m$.
However, this implies
\[ \frac{ ||B^m||^n }{J_{B^m}} \sim \frac{ |\lambda_s | ^{mn}}{[\prod_{i=1}^k |\lambda_i|^{n_i} ]^m } \to \infty \]
as $m\to \infty$ since $|\lambda _s|^n > \prod_{i=1}^k |\lambda_i|^{n_i} $. This means that $B$ cannot be uniformly quasiregular.

Next, assume that $B$ is not diagonal, that is, there is at least one Jordan block of size at least $2$. Now, raising $B$ to a power yields
\[ B^m = \begin{pmatrix} \mathcal{J}_1^m & & \\ &  \ddots & \\  & & \mathcal{J}_k^m \end{pmatrix},\quad \text{ where } \quad
\mathcal{J}_i^m = \begin{pmatrix} \lambda_i^m & {m\choose 1} \lambda_i^{m-1} & {m\choose 2} \lambda_i^{m-2}  & \cdots & {m\choose n_i-1} \lambda_i^{m-n_i+1} \\ 
& \lambda _i^m & {m\choose 1} \lambda_i^{m-1} & \ldots & \vdots \\ & & \ddots & \ddots  & \vdots  \\  & & & \lambda_i^m &{m\choose 1} \lambda_i^{m-1} \\
& &  & & \lambda_i^m \end{pmatrix}, \]
and so we obtain
\[ ||B^m||_{\infty}= \max_{i} ||\mathcal{J}_i^m||_{\infty}.\]
Denote by $I$ the index that gives the maximum. 
Clearly we have
\begin{align*} 
||\mathcal{J}_I^m||_{\infty} &= |\lambda_I|^m + {m\choose 1} |\lambda_I|^{m-1} + \ldots + {m\choose n_I-1} |\lambda_I|^{m-n_I+1} \\
&= |\lambda_I|^m \left ( 1 + \frac{{m\choose 1}}{|\lambda_I|} + \ldots + \frac{ {m\choose n_I-1}}{|\lambda_I|^{n_I-1} } \right ).
\end{align*}
Therefore, by \eqref{eq:norm} and the fact that all eigenvalues have the same absolute value,
\begin{align*}
 \frac{ ||B^m||^n }{J_{B^m}} &\geq \frac{ ||B^m||^n_{\infty}}{C^nJ_{B^m}} \\
&= \frac{ || \mathcal{J}_I^m||^n_{\infty} }{C^nJ_{B^m} }\\
&= \frac{ |\lambda_I|^{mn}  \left ( 1 + \frac{{m\choose 1}}{|\lambda_I|} + \ldots + \frac{ {m\choose n_I-1}}{|\lambda_I|^{n_I-1} } \right )^n }{ C^n [\prod_{i=1}^k |\lambda_i|^{n_i}  ]^m} \\
&= \frac{   \left ( 1 + \frac{{m\choose 1}}{|\lambda_I|} + \ldots + \frac{ {m\choose n_I-1}}{|\lambda_I|^{n_I-1} } \right )^n }{ C^n }
\end{align*}
As $m\to \infty$, this latter expression diverges to infinity, and so again $B$ cannot be uniformly quasiregular. This completes the proof of the lemma.
\end{proof}

\begin{proof}[Proof of Corollary \ref{cor:2}]
By the hypotheses, $Df(x_0)$ is simple and contains only $\psi = f'(x_0)$. Suppose that $L,M \in \mathcal{L}(x_0,f)$.
By Corollary \ref{cor:2}, there exists a quasiconformal map $G:\R^n\to\R^n$ such that $M=L\circ G$ and $G$ commutes with $\psi(x) = f'(x_0)x$.

Since $L,M$ are differentiable at $0$, $G$ is differentiable at $0$. Then since $G(\psi(x)) = \psi(G(x))$, we have
\[ G'(0) \circ \psi'(0) = G'(\psi(0)) \circ \psi'(0) = (G\circ \psi)'(0) = (\psi\circ G)'(0) = \psi(G(0)) \circ G'(0) = \psi'(0) \circ G'(0).\]
That is, $G'(0)$ commutes with $\psi'(0)$, but $\psi'(0)$ is just the matrix $f'(x_0)$. Hence $G'(0) \in Z(f'(x_0))$.

We claim that $G(x) = G'(0)x$ for all $x\in \R^n$. Since $G$ is differentiable at $0$ and fixes $0$, we can write
\[ G(x) = G'(0)x + E(x), \text{ where } \lim_{x\to 0} \frac{|E(x)|}{|x|} = 0.\]
Now, given any $x\in \R^n$,
\begin{align*} 
G(x) &= \psi ^k(G(\psi^{-k}(x))) \\
&= \psi^k (G'(0)\psi^{-k}(x) + E(\psi^{-k}(x))) \\
&= \psi^kG'(0)\psi^{-k}(x) + \psi^k[E\psi^{-k}(x)],
\end{align*}
using the fact that $\psi$ is linear. Since $G'(0)$ commutes with $\psi$, the first term here is just $G'(0)x$. For the second term,
given $\epsilon >0$, there exists $\delta>0$ such that if $|x|<\delta$, then $|E(x)| < \epsilon |x|$. 
Here, we use $|x|$ to denote the Euclidean length of a vector in $\R^n$.

By Lemma \ref{lem:uqr}, we can write $\psi = P^{-1}BP$, where $B$ is a composition of a rotation and a dilation $x\mapsto \lambda x$ for some $\lambda >0$.
Hence given any $x\in \R^n$, there exists $N\in \N$ such that if $k\geq N$, we have
\[ | \psi^kE\psi^{-k}(x) | \leq  |P^{-1}B^kPEP^{-1}B^{-k}P(x)| \leq ||P||^2||P^{-1}||^2\lambda^k\lambda^{-k} \epsilon |x|
= ||P||^2||P^{-1}||^2\epsilon |x|.\]
Since this holds for every $\epsilon >0$, it follows that $ | \psi^kE\psi^{-k}(x) | = 0$ and hence
$G(x) = G'(0)x$ as claimed. 
\end{proof}

\section{Schr\"oder's Functional Equation}

\begin{proof}[Proof of Corollary \ref{cor:3}]
Since $f$ is uqr with repelling fixed point $h(0)$, there exists $L\in\mathcal{L}(h(0),f)$ with a generalized derivative $\psi$. Hence
$f \circ h = h \circ A$ and $f \circ L = L \circ \psi$. Using the same idea as in the proof of Theorem \ref{thm:1}, there exists a quasiconformal map $G:\R^n \to \R^n$ such that $L\circ G = h$. Note that $A$ is a similarity and so it is obviously uniformly quasiconformal.

Since $h$ is automorphic with respect to $\Gamma$, we have $h(\gamma(x)) = h(x)$ for all $\gamma \in \Gamma$ and $x\in \R^n$. Hence $L(G(\gamma(x))) = L(G(x))$,
which implies that
\[ L(\gamma_G(x)) = L(x),\]
for all $x\in \R^n$ and where $\gamma_G = G\circ \gamma \circ G^{-1}$. Hence $L$ is automorphic with respect to the quasiconformal group $\Gamma_G = \{ \gamma_G : \gamma \in \Gamma \}$.

\end{proof}

\section{The Zorich Map and the Latt\'{e}s-type Power Map}

The Zorich mapping, first constucted in \cite{Zorich}, is a quasiregular mapping $Z: {\R}^3 \rightarrow {\R}^3{\backslash}\{0\}$ that is analogous to the exponential map in the plane.  We recall the definition of $Z$ as follows.

Denote by $(u,v,w)$ coordinates on $\R^3$.
Partition the plane $\{w=0 \}$ into squares of length $\pi$, such that one square is centred at the origin.  Choose a bi-Lipschitz map
\[ h: \left [-\frac{\pi}{2}, \frac{\pi}{2} \right ]^2 \rightarrow \{(u,v,w): u^2 + v^2 + w^2 = 1, w \geq 0\}. \]  Here, as in \cite{FN}, we will consider 
\[ h(u,v) = \left (\frac{u\sin M(u,v)}{\sqrt{u^2 + v^2}}, \frac{v\sin M(u,v)}{\sqrt{u^2 + v^2}}, \cos (M(u,v)) \right ),\]  where $M(u,v) = $max$\{|u|,|v|\}$.  Then $h$ maps the square centered at zero of length $\pi$ onto the upper hemisphere of the unit sphere. We extend $h$ to the beam $[-\pi/2,\pi/2]^2 \times\R$ by $Z(u,v,w)=e^wh(u,v)$. The interior of this beam is mapped onto the upper half-space and the boundary is mapped onto the plane $\{w=0\}$ with the origin removed. 
We then extend $Z$ to the whole of $\R^3$ by repeated reflections in the sides of beams in the domain and in the plane $\{w=0\}$ in the image.

The resulting mapping is quasiregular and automorphic with respect to the group of translations 
\[ \Gamma = \{ \gamma(u,v,w) = (u+m\pi,v+n\pi,w): m,n \in \Z \} .\]
Following \cite{Mayer1}, there exists a uqr version of the power mapping which arises as a solution to the Schr\"oder equation
$P\circ Z = Z \circ d$, where $d(x) = m^2x$ for some $m\in \N$ with $m\geq 2$. Since $Z(0)=(0,0,1)$, $P$ has a repelling fixed point at $(0,0,1)$ and we can consider the set of linearizers $\mathcal{L}(Z(0),P)$.

Clearly $Z\in \mathcal{L}(Z(0),P)$, and for any other $L\in \mathcal{L}(Z(0),P)$, by Corollary \ref{cor:3}, $L$ must be automorphic with respect to a quasiconformal group arising as a quasiconformal conjugate of $\Gamma$.

Next, we will show that $Z\notin \mathcal{L}_1(Z(0),P)$. Let $x_0 = Z(0)=(0,0,1)$. Any $L\in \mathcal{L}_1(x_0,P)$ is the limit of a subsequence of $L_k(x) = P^k(x_0 + \lambda_kx)$, where $\lambda_k > 0$ for all $k$ and $\lambda_k \rightarrow 0$ as $k \rightarrow \infty$.  
Suppose that $Z\in \mathcal{L}_1(x_0,P)$. Then using the Schr\"oder equation and the fact that $Z^{-1}$ exists in a neighborhood of $x_0$, there exists a convergent subsequence of 
\[ L_k(x) = Z \circ d^k \circ Z^{-1} (x_0 + \lambda_kx),\]
in a neighborhood of $0$. Now, it is not hard to see that the infinitesimal space (recall section \ref{sec:t} and \cite{G.M.R.V.}) $T(x_0,Z^{-1})$ is simple and contains only the mapping
\[ g(u,v,w) = C(s(u,v),w/e), \]
where $s:\R^2 \to \R^2$ is the radial stretch which maps the circle of radius $r$ onto the square $\{ (u,v): \max \{|u|,|v|\} = r \}$ for $r>0$, and $C$ is a constant chosen so that $|g(B(0,1))| = |B(0,1)|$.
It is not hard to see that $g$ is homogeneous with $D=1$, recalling $\eqref{eq:hom}$.
We therefore have that
\begin{equation}
\label{eq:final} 
d^k(Z^{-1}(x_0 + \lambda_kx)) \approx d^k r(x_0,Z^{-1},\lambda_k)g(x) .
\end{equation}
The coefficient in front of $g$ here is just a real number for each $k$, and so by the definition of $g$ (in particular the shape of $g(S)$, where $S$ denotes the unit sphere in $\R^3$) there is no way that this expression can converge to the identity along any subsequence. In turn, this means that no subsequence of $L_k$ can converge to $Z$ and the claim follows.

We finally observe that by choosing the sequence $\lambda_k$ appropriately in \eqref{eq:final} and using the fact that $g$ is homogeneous, we have that 
\[ \mathcal{L}_1(Z(0),P) = \{ Z(g(rx)) : r>0 \}.\]


\begin{thebibliography}{widest-label}

\bibitem{A.K.P.}
L. Astola, R. Kangaslampi, K. Peltonen,
Latt\`es-type mappings on compact manifolds,
{\it Conform. Geom. Dyn.}, {\bf 14} (2010), 337-367.

\bibitem{Berg}
W. Bergweiler,
Iteration of Quasiregular Mappings,
{\it Comp. Meth. and Func. Th.}, {\bf 10} (2010), 455-481.

\bibitem{BF}
B. Branner, N. Fagella,
{\it Quasiconformal Surgery in Holomorphic Dynamics},
Cambridge University Press, 2014.

\bibitem{Fletcher1}
A. Fletcher,
Poincar\'{e} Linearizers in Higher Dimensions,
to appear in {\it Proc. Amer. Math. Soc.}

\bibitem{FN}
A. Fletcher, D. Nicks,
Iteration of quasiregular tangent functions in three dimensions,
{\it Conform. Geom. Dyn.}, {\bf16} (2012), 1-21.

\bibitem{G.M.R.V.}
V. Gutlyanskii, O. Martio, V. Ryazanov, M. Vuorinen,
Infinitesimal Geometry of Quasiregular Mappings,
{\it Ann. Acad. Sci. Fenn.},
{\bf 25} (2000), 101-130.

\bibitem{H.M.}
A. Hinkkanen, G. Martin,
Attractors in quasiregular semigroups, 
{\it XVIth Rolf Nevanlinna Colloquium (Joensuu, 1995)},
de Gruyter, Berlin (1996), 135-141.

\bibitem{H.M.M.}
A. Hinkkanen, G. Martin, V. Mayer,
Local Dynamics of Uniformly Quasiregular Mappings,
{\it Math. Scand.}, {\bf 95} (2004) no. 1, 80-100.

\bibitem{IwMart}
T. Iwaniec, G. Martin
{\it Geometric Function Theory and Non-linear Analysis}, 
Oxford University Press, 2001.

\bibitem{Mayer1}
V. Mayer,
{\it Uniformly Quasiregular Mappings of Latt\`{e}s type},
{\it Conform. Geom. Dyn.}, {\bf 1} (1997), 104-111.

\bibitem{Milnor}
J. Milnor,
{\it Dynamics in One Complex Variable}, Third Edition,
Annals of Mathematical Studies, {\bf 160}, Princeton University Press, Princeton, NJ, 2006.

\bibitem{Rickman} S. Rickman,
Quasiregular mappings, Ergebnisse der Mathematik und ihrer
Grenzgebiete 26, Springer, 1993.

\bibitem{Tropp} J. Tropp,
An elementary proof of the spectral radius formula for matrices,
unpublished, available at {\tt http://users.cms.caltech.edu/$\sim$jtropp/notes/Tro01-Spectral-Radius.pdf}.

\bibitem{Zorich}
V.A. Zorich,
{\it The theorem of M.A. Lavrent'ev on quasiconformal mappings}, Math. USSR - Sbornik 3 (1967), 389-403.

\end{thebibliography}
\end{document}